\def\Q{\mathbb Q}
\def\mb{\mathbb}
\def\mc{\mathcal}
\def\wt{\widetilde}
\def\bs{\backslash}
\def\xr{\xrightarrow}
\def\L{\Lambda}
\newtheorem{theorem}{Theorem}[subsection]
\newtheorem{cor}{Corollary}[theorem]
\newtheorem{lemma}[theorem]{Lemma}
\newtheorem{prop}[theorem]{Proposition}
\theoremstyle{definition}
\newtheorem{definition}[theorem]{Definition}
\theoremstyle{remark}
\begin{document}
\title{The norm map on the Bloch group for quadratic extensions}
\author{V. Bolbachan}
\address{National Research University Higher School of Economics \\ Department of Mathematics \\
Usacheva str., 6, Moscow 119048 Russia.\\}
\email{vbolbachan@gmail.com}
\date{September 2019} 
\begin{abstract}
    D. Rudenko proved the homotopy invariance of the truncated polylogarithmic complexes. It follows that on these complexes there is the norm map with good proprieties. We apply his result and get the explicit formula for the norm map in the case of quadratic extensions.
\end{abstract}
\maketitle
\section{Introduction}
\subsection{A Survey}
Let $K$ be a field. We are working modulo torsion. So all abelian groups are supposed to be tensored by $\Q$. We recall the following definition.

\begin{definition}[the pre-Bloch group] 
\label{def:Bloch_group}
Denote by $\mb Q[K\bs \{0,1\}]$ the free abelian group generated by the set $K\bs\{0,1\}$. We  denote these  generators  as  $[x]_2$,  where $x\in K\bs\{0,1\}$. We define {\it the pre-Bloch group} of the field $K$ as the quotient of the group $\Q[K\bs \{0,1\}]$ by the following elements(so-called Abel five-term relations)
\begin{equation*}
\label{formula:five-relation}
[x]_2-[y]_2+[y/x]_2+\left[(1-x)/(1-y)\right]_2-\left[\dfrac {1-x^{-1}}{1-y^{-1}}\right]_2,
    \end{equation*}
where $x,y\in K\backslash\{0,1\}, x\ne y$.
\end{definition}

We recall that the Milnor $K$-theory $K_n^M(K)$ of the field $K$ is defined as the quotient of the group $\Lambda^n K^{\times}$ by the subgroup generated by the elements of the form $x\wedge(1-x)\wedge y_{n-2}, y_{n-2}\in \Lambda^{n-2}K^{\times}$. According to result of \cite{suslin1979reciprocity} for any field extension $F/K$ there is the norm map $N_{F/K}\colon K_n^M(F)\to K_n(K)$ with good proprieties.

D. Rudenko was defined the following complex placed in degrees $1,2$
$$B_2(K,n)\colon[B_2(K)\otimes
\Lambda^{n-2}K^{\times}]_a\xrightarrow{\delta_n} \Lambda^n K^{\times}.$$
Here $[B_2(K)\otimes \Lambda^{n-2}K^{\times}]_a$ is the quotient of the vector space $B_2(K)\otimes_{\Q} \Lambda^{n-2}_{\Q}K^{\times}$ by the subgroup generated by the elements of the form $[x]\otimes x\wedge y_{n-3}, y_{n-3}\in \Lambda^{n-3}K^{\times}.$ We have $H^2(B_2(K,n))\simeq K_n^M(K)$.

For any monic irreducible polynomial $P$ there is the following map of complexes

\begin{equation*}
\begin{tikzcd}[row sep=huge]
B_2(K(t))\otimes \Lambda^{n-1} K(t)^\times\arrow[r,"\delta_{n+1}"]\arrow[d,"\partial_{P,n}{[1]}"] & \Lambda^{n+1} K(t)^\times\arrow[d,"\partial_{P,n}{[2]}"]\\
B_2(K_P)\otimes \Lambda^{n-2} K_P^\times\arrow[r,"\delta_{n}"] & \Lambda^{n} K_P^\times,
\end{tikzcd}
\end{equation*}

called {\it the residue homomorphism}(see Sections 2.1 and 3.1 of \cite{rudenko2015strong}). Denote by $\partial$ the direct sum $\bigoplus\limits_{P}\partial_P\colon B_2(K(t),n+1)\to\bigoplus\limits_{P}B_2(K_P,n)$.

Following the approach of \cite{bass1973milnor}, D.Rudenko constructed the norm map $$N_{F/K}\colon H^1(B_2(F,n))\to H^1(B_2(K,n)).$$ Let us recall his construction. Main result of \cite{rudenko2015strong} is the following exact sequence:
\begin{equation}
\label{formula:Rudenko_sequence}
    0\to B_2(K,n+1)\to B_2(K(t),n+1)\xrightarrow{\partial} \bigoplus_{P}B_2(K_P,n)\to 0.
\end{equation}

Here the first map is the natural inclusion and the sum is taken over all monic irreducible polynomial over $K$. 

Let $x\in B_2(K_P,n)$ and denote by $x'\in B_2(K(t),n+1)$ some preimage of $x$ under the map $\partial$. Then the element $-\partial_{\infty}(x')$ is well-defined and we set $N_{F/K}(x):=-\partial_{\infty}(x')$. Here $\partial_{\infty}$ is the residue homomorphism at $\infty$.

There is the degree filtration $\mc F_d$ on the complex $B_2(K,n)$.  To prove the exactness of the sequence (\ref{formula:Rudenko_sequence}) D. Rudenko constructs some resolution $\wt B_2'(K,n)$ of the complex $B_2(K,n)$ and for any monic irreducible polynomial $P$ construct so-called {\it the co-residue map}
$$c_{P,n}'\colon \wt B_2'(K_P,n)\to gr_d^{\mc F} B_2(K(t),n+1),$$

such that for $d\ge 1$ and $i=1,2$ the following map

$$\bigoplus\limits_{\deg P=d}H^i(B_2(K_P,n))\simeq\bigoplus\limits_{\deg P=d}H^i(\wt B_2'(K_P,n))\to gr_d^{\mc F}H^i(B_2(K(t), n+1))$$

is inverse to the residue homomorphism $\partial$. 

\subsection{Statement of the Main Result}Let $P$ be irreducible polynomial of degree $2$ over $K$ and $F:=K[t]/P(t)$.
In the next section we will define some resolution $\wt B_2(F,n)$ of the complex $B_2(F,n)$ and modulo some simple homotopy(which is given by a simple explicit formula in the case $n=2$) construct the map $c_{P,n}\colon \wt B_2(F,n)\to B_2(K(t),n+1)/B_2(K,n+1)$. Here is our main result:

\begin{theorem}
\label{thm:main}
The map $c_{P,n}$ is well defined. If $Q$ is a monic irreducible polynomial over $K$ then the composition $\partial_Qc_{P,n}$ is equal to zero if $Q\ne P$ and is identical if $Q=P$.
\end{theorem}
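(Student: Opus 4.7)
The plan is to work at the level of generators of the resolution $\wt B_2(F,n)$ and to verify both claims by explicit lifting, exploiting the degree-$2$ geometry of $P$.

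First I would spell out the formula defining $c_{P,n}$ on generators. Since $F=K[t]/P(t)$ is quadratic, every element of $F^{\times}$ has a canonical representative by a polynomial of degree $\le 1$, and the image of $[x]_2 \otimes \ell_1\wedge\cdots\wedge\ell_{n-2}$ is assembled from these representatives, viewed in $K(t)^{\times}$, together with an auxiliary factor involving $P(t)$ that adjusts the wedge to length $n-1$. This produces an explicit element of $B_2(K(t))\otimes \Lambda^{n-1}K(t)^{\times}$ which we view modulo $B_2(K,n+1)$.

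For well-definedness, I would check that each defining relation of $\wt B_2(F,n)$ --- antisymmetry of the wedge, the vanishing $[x]_2\otimes x\wedge(\cdots)=0$, and the five-term relation --- is carried into the subcomplex $B_2(K,n+1)$, so that it becomes zero in the target quotient. The first two are essentially direct, since the chosen lift is compatible with the wedge operations and with substitution of $x$ by its rational representative. I expect the five-term relation to be the main obstacle: the Abel identity holds over $K(t)$ only after substituting rational lifts of both roots of $P$, and the resulting discrepancy has to be rewritten, via a symmetry argument over the two embeddings $F\hookrightarrow\overline{K}$, as an element of $B_2(K,n+1)$. This bookkeeping is the technical heart of the argument and the reason the theorem is stated only modulo the simple homotopy alluded to in the introduction.

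For the residue computation the argument splits by the relation of $Q$ to $P$. If $Q$ is a monic irreducible polynomial distinct from $P$, then by construction every factor entering the defining formula for $c_{P,n}$ is a unit at $Q$: the rational lifts of elements of $F^{\times}$ are ratios of polynomials coprime to $Q$, and $P(t)$ itself is coprime to $Q$. Consequently $\partial_Q c_{P,n}=0$ on all generators, hence on the full complex. If $Q=P$, I would substitute into the explicit formula for $\partial_{P,n}$: the $P$-adic valuation picks out precisely the factor $P(t)$, the surviving factors reduce modulo $P$ to the initial tensors in $F^{\times}$, and the result is the identity on each generator. Passage from generators to maps of complexes is automatic, since both sides are defined on $\wt B_2(F,n)$ and commute with the differentials by the general properties of $\partial$ recalled in the introduction.
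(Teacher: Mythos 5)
There is a genuine gap, and it concerns precisely the step you defer. In the resolution $\wt B_2(F,n)$ the middle term is \emph{free} (it is built from $\Q[F^{\times}]$, the free vector space on $F^{\times}$), so there are no five-term or ``$[x]_2\otimes x\wedge\cdots$'' relations to check there; what well-definedness actually requires is that the composite $c_{P,n}[1]\,j_n$, i.e.\ the restriction of $c_{P,n}[1]$ to $R_n(F)=\ker\wt\delta_n\cap\ker\pi_n[1]$, lands inside $B_2(K)\otimes_a\Lambda^{n-1}K^{\times}$ and hence dies in the quotient target. Your plan is to verify the five-term relation directly over $K(t)$ and to repackage the discrepancy ``via a symmetry argument over the two embeddings''; this is exactly the technical heart you acknowledge but do not supply, and it is not how the difficulty is resolved. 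The paper never manipulates five-term relations at all: once one knows $\partial_Q c_{P,n}[1]=0$ for $Q\ne P$ and $\partial_P c_{P,n}[1]=\pi_n[1]$, the element $c_{P,n}[1]j_n(r)$ has \emph{all} residues zero (because $\pi_n[1]j_n=0$), and Rudenko's exact sequence $0\to B_2(K,n+1)\to B_2(K(t),n+1)\to\bigoplus_Q B_2(K_Q,n)\to 0$ then forces it to lie in $B_2(K,n+1)$. Without this mechanism (or an actual computation replacing it), your well-definedness argument is a statement of intent rather than a proof.

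The residue claims are also not as automatic as you assert, because your sketch of $c_{P,n}$ omits the correction terms that make them true. For a degree-one $Q\ne P$ the naive lifts are \emph{not} units at $Q$: $L(x)=x_0+x_1t$ vanishes at a $K$-point, and $X(a,b)$ has nontrivial residues at degree-one places; the vanishing of all degree-one residues holds only because the construction composes everything with the projection $p_{n+1}=\mathrm{id}-C_{1,n}\circ\partial_{1,n+1}$ and adds the homotopy term $h_{n+1}\bigl(p_{n+1}(Z(a,b)\wedge L(y_{n-1}))\bigr)$. Your ``every factor is a unit at $Q$'' argument is valid only for places of degree at least two, where indeed the only factor of higher degree is $P$ itself. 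Finally, the claim that compatibility with the differentials is ``automatic'' is not correct: it is exactly the content of Lemma \ref{lemma:differential_of_X}, namely $\delta_2\bigl(X(a,b)\bigr)=-\bigl(P\wedge L(c)/(L(a)L(b))\bigr)+Z(a,b)$, combined with the identity $\delta_{n+1}h_{n+1}p_{n+1}=p_{n+1}$; the elements $X(a,b)$, $Z(a,b)$ and the homotopy $h_{n+1}$ are introduced precisely so that the diagram commutes. Both halves of the theorem therefore rest on structure (the projection $p_{n+1}$, the correction terms, and the exact sequence) that your proposal does not use, so as written it does not establish the statement.
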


\begin{cor}
\label{cor:main}
The norm map $N_{F/K}$ is given by the formula $-\partial_{\infty}c_{P,n}$. 
\end{cor}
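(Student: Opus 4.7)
The plan is to reduce the statement directly to the definition of $N_{F/K}$ coming from the exact sequence (\ref{formula:Rudenko_sequence}) recalled above. Given a class represented by $x \in B_2(K_P,n) = B_2(F,n)$, the norm is computed by choosing any lift $x' \in B_2(K(t),n+1)$ such that $\partial_P(x') = x$ and $\partial_Q(x') = 0$ for every other finite monic irreducible $Q$, and then setting $N_{F/K}(x) = -\partial_\infty(x')$. The corollary will follow by observing that Theorem \ref{thm:main} supplies exactly such a lift via $c_{P,n}$.

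Before applying $\partial_\infty$ to something living in $B_2(K(t),n+1)/B_2(K,n+1)$, I would first check that $\partial_\infty$ descends to this quotient. Any element of $B_2(K,n+1)$, viewed inside $B_2(K(t),n+1)$, is a combination of tensors whose arguments all lie in $K^\times$, so the valuation $v_\infty$ vanishes on every wedge factor and the residue $\partial_\infty$ acts trivially. The same argument shows $\partial_Q$ descends for every finite $Q$, which is what makes the composition $\partial_Q c_{P,n}$ appearing in the theorem well-posed in the first place.

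With that in place, choose any lift $\tilde c \in B_2(K(t),n+1)$ of $c_{P,n}(x)$. Theorem \ref{thm:main} asserts that $\partial_Q(\tilde c) = 0$ for every finite $Q \ne P$ and $\partial_P(\tilde c) = x$, so $\tilde c$ is a legitimate choice of preimage in the definition of the norm map, whence
\[
N_{F/K}(x) = -\partial_\infty(\tilde c) = -\partial_\infty c_{P,n}(x),
\]
the right-hand side being independent of the chosen lift by the previous paragraph. The homotopy ambiguity in the construction of $c_{P,n}$ is harmless on $H^1$ because $\partial_\infty$ is itself a chain map, so passing to cohomology absorbs it. All the real content sits in Theorem \ref{thm:main}; once that is granted, the corollary is essentially automatic, and I would expect the only real obstacle in writing out the proof to be verifying the two well-definedness points above, both of which reduce to the trivial residue behavior of constants.
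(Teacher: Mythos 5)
Your argument is correct and is essentially the paper's own reasoning: the corollary is immediate from Theorem \ref{thm:main} together with the definition of $N_{F/K}$ via the exact sequence (\ref{formula:Rudenko_sequence}), since $c_{P,n}(x)$ supplies a legitimate preimage under $\partial$ and $\partial_\infty$ kills $B_2(K,n+1)$ (constants have trivial residue at $\infty$). The well-definedness checks you spell out are exactly the points the paper leaves implicit, so nothing more is needed.
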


Let us formulate these results explicitly in the case $n=2$. Consider the following map of complexes:
\begin{equation*}
\begin{tikzcd}[row sep=huge]
\Q[F\bs\{0,1\}]\oplus R_1(F^{\times})\otimes \Q[F^{\times}]\arrow[r,"\wt \delta_2"]\arrow[d,"{\pi_2[1]}"] & \Lambda^2 \Q[F^{\times}]\arrow[d,"\pi_2{[2]}"]\\
B_2(F) \arrow[r,"\delta_2"]& \Lambda^2 F^{\times}_\Q
\end{tikzcd}
\end{equation*}

Here the vector space $R_1(F^{\times})$ is freely generated by the symbols $[a,b], a,b\in F^{\times}$, the map $\wt \delta_2$ is defined by the formula $\wt \delta_2([x]_2)=[x]\wedge [1-x], \wt\delta_2([a,b]\otimes y_1)=[ab]\wedge y_1-[a]\wedge y_1-[b]\wedge y_1$, the map  $\pi_2$ is defined by the formulas $\pi_2[1]([x])=[x]_2, \pi_2[1]([a,b]\otimes y_1)=0, \pi_2[2]([x]\wedge [y])=x\wedge y$. Denote by $R_2(K)$ the subgroup $\ker \wt \delta_2\cap\ker\pi_2[1]$. The following complex is a resolution of the complex $B_2(K)$:
$$R_{2}(K)\xr{j_2} \Q[F\bs\{0,1\}]\oplus R_1(F^{\times})\otimes \Q[F^{\times}]\xrightarrow{\wt \delta_2} \Lambda^2 \Q[F^{\times}].$$
We will call it $\wt B_2(K)$. There is the natural projection $\pi_2\colon {\wt B}_2(K)\to B_2(K)$. The map $\pi_2$ is a quasi-isomorphism.

For an element $x\in F$ denote by $x_0, x_1$ the unique elements of $K$ such that $x=x_0+x_1\xi$ and set $L(x)=x_0+x_1 t\in K(t)^{\times}$. Define the element $l_x$ as follows: it is equal to $1$ if $x\in K^{\times}$ and $L(x)/x_1$ otherwise. For two elements $a,b\in F$ denote by $c$ they product. We need the following definition

\begin{definition}
Let $a,b\in F$. Define the element $X(a,b)\in B_2(K(t))$ as follows:
\begin{enumerate}
    \item If $a\in K$ or $b\in K$ then $X(a,b)=0$.
    \item If $a,b\not \in K$ but $c\in K$ then $X(a,b)=[L(c)/(L(a)L(b))]_2$.
    \item In the remaining case
    $$X(a,b)=[L(c)/(L(a)L(b))]_2-[l_{c}/l_{a}]_2-[l_{c}/l_{b}]_2.$$
\end{enumerate}
\end{definition}

\begin{definition}
Define the following morphism of complexes:
\begin{equation*}
\begin{tikzcd}[row sep=huge]
R_2(F)\arrow[r]\arrow[d]&  \Q[F\bs\{0,1\}]\oplus  R_1(F^{\times})\otimes \Q[F^{\times}]\arrow[r,"\wt\delta_2"] \arrow[d,swap,"n_{F/K,2}{[1]}"]& \Lambda^2 \Q[F^{\times}] \arrow[d,"n_{F/K,2}{[2]}"]\\
0\arrow[r]&
B_2(K)\arrow[r,"\delta_{2}"] & \Lambda^{2} K^{\times}
\end{tikzcd}
\end{equation*}

\begin{enumerate}
    \item We set $n_{F/K,2}[1]([x]_2)=2[x]_2$ if $x\in K$ and $0$ otherwise.
    \item If $c\in K$ the value of $n_{F/K,2}[1]([a,b]\otimes [c])$ is equal to $0$. In other case it is equal to $\partial_{l_c}(X(a,b))$.
    \item The value $n_{F/K,2}[2]([a]\wedge [b])$ is defined as follows:
    \begin{enumerate}
        \item If $a,b\in K$ then  $n_{F/K,2}[2]([a]\wedge [b])=2(a\wedge b)$.
        \item If, say, $a\in K,b\not\in K$ then it is equal to $a\wedge N(b)$ where $N(b)$ is the norm of the element $b$.
        \item If $a,b\not\in K$, but $a/b=t\in K$ then $n_{F/K,2}[2]([a]\wedge [b])=t\wedge N(b)$.
        \item In the remaining case
        $$n_{F/K,2}[2]([a]\wedge [b])=2(a_1\wedge b_1)-\dfrac {N(a)}{a_1^2}\wedge \left(b_0-\dfrac {b_1a_0}{a_1}\right)-\left(a_0-\dfrac {a_1b_0}{b_1}\right)\wedge \dfrac {N(b)}{b_1^2}.$$

    \end{enumerate}
\end{enumerate}
\end{definition}

We have the following

\begin{theorem}
\label{thm:norm_2}
Let $F/K$ be a quadratic extension. The map $n_{F/K,2}$ is actually a morpism of complexes. For any $j=1,2$ the following diagramm is commutative
\begin{equation*}
\begin{tikzcd}[row sep=huge]
H^j(\wt B_2(F))\arrow[r,"\pi_{2}"]\arrow[d,swap,"n_{F/K,2}"]& H^j(B_2(F))\arrow[ld,"N_{F/K}"]\\
H^j(B_2(K))
\end{tikzcd}
\end{equation*}
\end{theorem}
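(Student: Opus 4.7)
The plan is to reduce the theorem to Corollary \ref{cor:main}. Once the co-residue map $c_{P,2}\colon \wt B_2(F)\to B_2(K(t),3)/B_2(K,3)$ has been constructed in the next section via Theorem \ref{thm:main}, the corollary identifies $N_{F/K}\circ\pi_2$ with $-\partial_\infty\circ c_{P,2}$ at the level of cohomology. The task thus splits into (a) fixing a specific chain-level lift $c_{P,2}$, (b) computing $-\partial_\infty\circ c_{P,2}$ on generators, and (c) showing that the resulting formulas agree with those defining $n_{F/K,2}$. Since $c_{P,2}$ and $\partial_\infty$ are morphisms of complexes, (c) also yields the morphism-of-complexes assertion for $n_{F/K,2}$; the commutativity of the displayed cohomology diagram then follows from Corollary~\ref{cor:main}.

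On degree-one generators I would take $c_{P,2}([x]_2)$ to be built out of $[L(x)/(\cdots)]_2$-terms (modelled on the definition of $X(a,b)$) tensored with a suitable element of $K(t)^\times$, chosen so that the residue at $P$ returns $[x]_2$ and every other finite residue vanishes. Then $-\partial_\infty$ returns $2[x]_2$ when $x\in K$ (the extra factor of $2$ coming from $[F:K]=2$) and vanishes otherwise, reproducing $n_{F/K,2}[1]([x]_2)$. On $[a,b]\otimes [c]$ the natural lift involves $X(a,b)$ itself: when $c=ab\in K$ one has $l_c=1$ and the infinite residue collapses, while in the generic case $-\partial_\infty$ of the lift produces $\partial_{l_c}(X(a,b))$, which is exactly the value prescribed in the definition.

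For the top-degree generator $[a]\wedge [b]\in \L^2\Q[F^\times]$ the lift has the shape $L(a)\wedge L(b)\wedge \alpha$ in $\L^3 K(t)^\times$ modulo $B_2(K,3)$, where $\alpha\in K(t)^\times$ is an auxiliary element chosen so that the residue at every finite prime different from $P$ vanishes. The four cases in the definition of $n_{F/K,2}[2]$ correspond to specializations of this formula as one of $a$, $b$, or $a/b$ degenerates into $K$; the asymmetric correction terms $N(a)/a_1^2$ and $N(b)/b_1^2$ in the generic case are precisely what $-\partial_\infty$ outputs once $\alpha$ is expanded and the result is reduced modulo the bilinearity and skew-symmetry relations in $\L^2 K^\times$.

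The main technical obstacle will be the generic case of $n_{F/K,2}[2]$: writing down the counterterm $\alpha$ explicitly, and then unraveling $-\partial_\infty$ of $L(a)\wedge L(b)\wedge\alpha$ modulo $B_2(K,3)$ to land exactly at
$$2(a_1\wedge b_1)-\frac{N(a)}{a_1^2}\wedge\left(b_0-\frac{b_1a_0}{a_1}\right)-\left(a_0-\frac{a_1b_0}{b_1}\right)\wedge\frac{N(b)}{b_1^2}.$$
Once this single calculation is in place, the other cases and the well-definedness of $n_{F/K,2}$ as a morphism of complexes will follow formally.
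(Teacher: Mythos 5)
Your plan coincides with the paper's own proof: the paper simply observes that $n_{F/K,2}$ equals the composition $-\partial_{\infty}c_{P,2}$ (with $c_{P,2}$ the co-residue map constructed in Section 2) and then invokes Corollary \ref{cor:main}, which is exactly the reduction you describe, including deducing the morphism-of-complexes claim from the fact that $-\partial_\infty c_{P,2}$ is one. The generator-by-generator verification you flag as the main technical obstacle (notably the generic case of $n_{F/K,2}[2]$) is precisely the computation the paper leaves implicit as ``easy to see,'' so your route is the same, with that check still to be written out.
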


\subsection{Acknowledgments} The author is grateful to his supervisor Andrey Levin for posing the problem and useful remarks.

\section{The construction of the map \texorpdfstring{$c_{P,n}$}{TEXT}} Similarly to the case $n=2$ one can define the complex $\wt B_2(F,n)$. It has the following form:
$$\wt B_2(F,n)\colon R_n(F)\xr{j_n} R_1(F^{\times})\otimes \Lambda^{n-1}\Q[F^\times]\oplus \Q[F^{\times}]_2\otimes \Lambda^{n-2}\Q[F^{\times}]\xr{\wt \delta_n} \Lambda^{n} \Q[F^\times].$$
As in the case $n=2$, there is the natural map of complexes $\pi_n\colon \wt B_2(F,n)\to B_2(F,n)$. It is easy to see that this map is a quasi-isomorphism.

Our immediate goal is to construct a map of complexes $$c_{P,n}\colon \wt B_2(F,n)\to B_2(K(t), n+1)$$ such that the following diagram will be commutative:
\begin{equation}
\label{formula:diagramm_cPn}
\begin{tikzcd}[row sep=huge]
R_1(F^{\times})\otimes \Lambda^{n-1}\Q[F^\times]\oplus \Q[F^{\times}]_2\otimes \Lambda^{n-2}\Q[F^{\times}] \arrow[r,"\widetilde\delta_{n}"] \arrow[d,swap,"c_{P,n}{[1]}"]& \Lambda^{n} \Q[F^\times] \arrow[d,"c_{P,n}{[2]}"]\\
B_2(K(t))\otimes_a \Lambda^{n-1} K(t)^\times\arrow[r,"\delta_{n+1}"] & \Lambda^{n+1} K(t)^\times.
\end{tikzcd}
\end{equation}


Define the map $L\colon \Lambda^n \mb Q[F^{\times}]\to \Lambda^n K(t)^{\times}$ by the formula $L({d_1\wedge\dots\wedge d_n})=L(d_1)\wedge\dots\wedge L(d_n)$. Let $x\in K$. Define the following map:

\begin{equation*}
\begin{tikzcd}[row sep=huge]
B_2(K)\otimes \Lambda^{n-2}K^{\times} \arrow[r,"\delta_n"] \arrow[d,swap,"c_{x,n}{[1]}"]& \Lambda^{n} K^{\times} \arrow[d,"c_{x,n}{[2]}"]\\
B_2(K(t))\otimes \Lambda^{n-1} K(t)^\times\arrow[r,"\delta_{n+1}"] & \Lambda^{n+1} K(t)^\times
\end{tikzcd}
\end{equation*}

by the formulas $c_{x,n}[1]([x]\otimes d)=[x]_2\otimes (t-x)\wedge d, c_{x,n}[2](d)=(t-x)\wedge d$. We set $C_{1,n}=\bigoplus\limits_{x\in K}c_{x,n}.$ It is easy to see that $C_{1,n}$ is a morphism of complexes.

If $\deg P=1$ we can identify $K_P$ with $K$. Let us denote by $\partial_{1,n}$ the sum $\Sigma_{\deg P=1}\partial_{P,n}$. This map gives the following commutative diagram:

\begin{equation*}
\begin{tikzcd}[row sep=huge]
B_2(K(t))\otimes \Lambda^{n-1} K(t)^\times\arrow[r,"\delta_{n+1}"]\arrow[d,"\partial_{1,n+1}{[1]}"] & \Lambda^{n+1} K(t)^\times\arrow[d,"\partial_{1,n+1}{[2]}"]\\
B_2(K_P)\otimes \Lambda^{n-2} K^\times\arrow[r,"\delta_{n}"] & \Lambda^{n} K^\times.
\end{tikzcd}
\end{equation*}

We have $\partial_{1,n+1}\circ C_{1,n}=id$. Denote the map $id-C_{1,n}\circ \partial_{1,n+1}$ by $p_{n+1}$. Obviously $p_{n+1}$ is a projection to the kernel of the map $\partial_{1,n+1}$.

Now we can define the map $c_{P,n}[2]$ by the formula $$c_{P,n}[2](y_n)=p_{n+1}[2](P\wedge L(y_n)).$$
The second component of the map $c_{P,n}[1]$ is defined by the formula $$c_{P,n}[1]([x]\otimes y_{n-2})=p_{n+1}[1]([L(x)]_2\otimes P\wedge L(y_{n-2})).$$

Denote by $K(t)^{\times}_d$ the abelian group generated by the irreducible polynomial of degree not higher than $d$. Denote by $\Lambda^{'n+2}K(t)_1$ the subgroup of $\Lambda^{n+2}K(t)_1$ generated by the elements of the form $(t-x)\wedge w_{n+1}, x\in K, w_{n+1}\in \Lambda^{n+1}K(t)_1$ and let $B_2(K(t))_1'$ be the subgroup of $B_2(K(t))$ generated by the elements of the form $[(t-a)/(t-b)]_2, a,b\in K$.
It is easy to see that the following sequence is exact
$$B_2(K(t))_1'\otimes \Lambda^{n-1}K(t)^{\times}_1\to \Lambda^{'n+1} K(t)^{\times}_1\xr{\partial_{1,n+1}[2]} \bigoplus\limits_{x\in\mb A^1(K)}\Lambda^{n}K^{\times}\to 0.$$

Since we are work over $\mb Q$ there is a homotopy $h_{n+1}\colon  \Lambda^{'n+1} K(t)^{\times}_1\to B_2(K(t))_1'\otimes \Lambda^{n-1}K(t)^{\times}_1$ such that $\delta_{n+1} h_{n+1}=p_{n+1}$ and $h_{n+1}((t-a)\wedge y_{n-1})=0, y_{n-1}\in \L^n K^{\times}$. In the case $n=2$ we can take $h_2(t-a\wedge y_1)=0, h_2((t-a)\wedge (t-b))=-[(t-b)/(a-b)]_2, a,b,y_1 \in K$.

We recall that we defined the element $X(a,b)$ in the previous section. Denote the element $l_{c}\wedge N_{c}-l_{a}\wedge N_{a}-l_{b}\wedge N_{b}$ by $Z(a,b)$. Obviously, the element $Z(a,b)\wedge L(d)$ lies in the subgroup $\Lambda^{'n+1} K(t)^{\times}_1$

Define the map $c_{P,n}[1]$ on the component $R_1(F^{\times})\otimes \Lambda^{n-1}\Q[F^*]$ by the formula $$c_{P,n}[1]([a,b]\otimes y_{n-1})=p_{n+1}[1](-X(a,b)\otimes L(y_{n-1}))+h_{n+1}(p_{n+1}(Z(a,b)\wedge L(y_{n-1}))).$$

\section{The proofs}
\subsection{The diagram (\ref{formula:diagramm_cPn}) is commutative}
\begin{prop}
\label{prop:second_component}
On the component $\Q[F]_2\otimes \Lambda^{n-2}\Q[F^{\times}]$ we have an equality $$c_{P,n}[2]\wt\delta_n=\delta_{n+1}c_{P,n}[1].$$
\end{prop}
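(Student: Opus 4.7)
\noindent\textit{Proof plan.} The plan is to reduce the equality $c_{P,n}[2]\wt\delta_n=\delta_{n+1}c_{P,n}[1]$ on the component $\Q[F^{\times}]_2\otimes\Lambda^{n-2}\Q[F^{\times}]$ to the single fact that $p_{n+1}=\mathrm{id}-C_{1,n}\circ\partial_{1,n+1}$ is a morphism of complexes. This property is immediate from the corresponding property of $C_{1,n}$ and of the residues $\partial_{1,n+1}$, both of which are already recorded in the excerpt. Once this is in hand, the proposition follows from a short formal computation together with the compatibility of $L$ with $x\mapsto 1-x$.

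First I would unfold the left-hand side on a generator $[x]\otimes y_{n-2}$. Using $\wt\delta_n([x]\otimes y_{n-2})=[x]\wedge[1-x]\wedge y_{n-2}$ (the evident generalization of the formula $\wt\delta_2([x]_2)=[x]\wedge[1-x]$) together with the definition $c_{P,n}[2](w)=p_{n+1}[2](P\wedge L(w))$, the left-hand side becomes
$$p_{n+1}[2]\bigl(P\wedge L(x)\wedge L(1-x)\wedge L(y_{n-2})\bigr).$$
For the right-hand side, I would substitute the definition $c_{P,n}[1]([x]\otimes y_{n-2})=p_{n+1}[1]([L(x)]_2\otimes P\wedge L(y_{n-2}))$ and move $\delta_{n+1}$ past $p_{n+1}[1]$ using the chain-map property; this yields
$$p_{n+1}[2]\bigl(L(x)\wedge(1-L(x))\wedge P\wedge L(y_{n-2})\bigr).$$
The elementary identity $L(1-x)=1-L(x)$, which follows by writing $x=x_0+x_1\xi$ so that $L(1-x)=(1-x_0)-x_1 t=1-L(x)$, converts this to $p_{n+1}[2](L(x)\wedge L(1-x)\wedge P\wedge L(y_{n-2}))$. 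The two transpositions needed to cycle $P$ to the front then contribute signs $(-1)^2=1$, giving equality with the left-hand side.

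I do not anticipate any substantive obstacle; the content of the proposition is almost entirely formal. The only point requiring care is checking that $p_{n+1}$ is a well-defined morphism of complexes on the antisymmetrized tensor product $[B_2(K(t))\otimes\Lambda^{n-1}K(t)^{\times}]_a$. But this quotient is formed precisely so that $\delta_{n+1}$ descends to it, since the relation $[z]_2\otimes z\wedge w$ has trivial image $z\wedge(1-z)\wedge z\wedge w=0$; and since $C_{1,n}$ and $\partial_{1,n+1}$ are already chain maps on the relevant complexes, so is their composition, hence so is $p_{n+1}$.
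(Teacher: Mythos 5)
Your proposal is correct and follows essentially the same route as the paper: unfold both sides via the definitions of $c_{P,n}[1]$, $c_{P,n}[2]$ and $\wt\delta_n$, commute $\delta_{n+1}$ past $p_{n+1}$ using that $p_{n+1}=\mathrm{id}-C_{1,n}\circ\partial_{1,n+1}$ is a chain map, and identify the two results via $L(1-x)=1-L(x)$. Your extra remarks (the sign bookkeeping and the verification that $p_{n+1}$ is a morphism of complexes) only make explicit what the paper leaves implicit.
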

\begin{proof}
We have
\begin{equation*}
    \begin{split}
        &\delta_{n+1}c_{P,n}[1]([x]\otimes y_{n-2})=\delta_{n+1}p_{n+1}[1]([L(x)]_2\otimes P\wedge L(y_{n-2}))=\\
        &=p_{n+1}[2]\delta_{n+1}(([L(x)]_2\otimes P\wedge L(y_{n-2}))=p_{n+1}[2](L(x)\wedge L(1-x)\wedge P\wedge L(y_{n-2}))=\\
        &=c_{P,n}[2]\wt\delta_n([x]_2\wedge y_{n-2}).
    \end{split}
\end{equation*}
\end{proof}

Define the elements $N_a\in K^{\times}$ as follows. If $a\in K$, then $N_a=1$. In the remaining case, $N_a=N(a)/a_1^2$ where $N(a)$ is norm of the element $a$. We will denote by $c$ the product $ab$.

\begin{lemma}
\label{lemma:differential_of_X}
We have 

$$\delta(X(a,b))=-\left(P\wedge \dfrac {L(c)}{L(a)L(b)}\right)+Z(a,b).$$
\end{lemma}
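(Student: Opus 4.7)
The plan is to verify the identity by case analysis following the three-case definition of $X(a,b)$. The central tool is the master identity
\[
L(a) L(b) - L(c) = a_1 b_1 P(t) \quad \text{in } K[t],
\]
proved by direct expansion using the relation $\xi^2 = \alpha + \beta \xi$ (writing $P(t) = t^2 - \beta t - \alpha$): this pins down $c_0 = a_0 b_0 + a_1 b_1 \alpha$ and $c_1 = a_0 b_1 + a_1 b_0 + a_1 b_1 \beta$, and the identity then drops out. Rearranging yields $1 - L(c)/(L(a) L(b)) = a_1 b_1 P/(L(a) L(b))$, so that $\delta([L(c)/(L(a)L(b))]_2)$, computed as a bilinear wedge in $\Lambda^2 K(t)^{\times}_{\Q}$, has its $P$-bearing terms collecting automatically into $-P \wedge L(c)/(L(a)L(b))$; the remaining terms must match $Z(a,b)$.

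Case 1 ($a \in K$ or $b \in K$) is immediate: if $a \in K^{\times}$, then $L(c) = a L(b)$, so $L(c)/(L(a)L(b)) = 1$; combined with $l_c = l_b$ and $N_c = N_b$ this gives $Z(a,b) = 0 = X(a,b)$. In case 2 ($c \in K$, $a, b \notin K$), only the first summand of $X(a,b)$ is present, and the key input is $c^2 = N(a) N(b)$, from multiplicativity of the norm together with $c \in K$; working additively in $K^{\times}_{\Q}$, this reads $c = \tfrac{1}{2}(N(a) + N(b))$, which one substitutes for $L(c) = c$ in the non-$P$ wedge terms. After rewriting in terms of $l_a = L(a)/a_1$, $N_a = N(a)/a_1^2$, $l_b = L(b)/b_1$, $N_b = N(b)/b_1^2$ and using $l_c \wedge N_c = 0$, one arrives at $Z(a,b)$.

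For case 3 (generic), the two analogous identities
\[
L(a) c_1 - L(c) a_1 = b_1 N(a), \qquad L(b) c_1 - L(c) b_1 = a_1 N(b),
\]
proved by the same direct computation, yield $1 - l_c/l_a = b_1 N(a)/(L(a) c_1)$ and $1 - l_c/l_b = a_1 N(b)/(L(b) c_1)$, hence bilinear expansions of $\delta([l_c/l_a]_2)$ and $\delta([l_c/l_b]_2)$. Summing the three wedge expansions in $X(a,b) = [L(c)/(L(a)L(b))]_2 - [l_c/l_a]_2 - [l_c/l_b]_2$, using $N(c) = N(a) N(b)$ to split the $L(c) \wedge N(c)$ contributions, and collecting terms by the left factor of each wedge monomial, one reads off that the non-$P$ part assembles into $Z(a,b) = l_c \wedge N_c - l_a \wedge N_a - l_b \wedge N_b$.

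The main obstacle is the bookkeeping in case 3: each of the three wedge expansions produces roughly a dozen monomials, and careful sign-tracking is essential for seeing the cancellations, including antisymmetry of $\wedge$, the vanishing of $[-1]$ in $K^{\times}_{\Q}$, and the identifications of $l_x, N_x$ in terms of $L(x), x_1, N(x)$. Beyond the master congruence identities displayed above, no deeper ingredient is required.
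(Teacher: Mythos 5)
Your overall strategy is the same as the paper's: a case analysis following the definition of $X(a,b)$, driven by bilinear expansion of $\delta_2[x]_2$ together with exactly the identities of the paper's auxiliary lemma, namely $L(a)L(b)-L(c)=a_1b_1P$, the identity $L(a)c_1-L(c)a_1=b_1N(a)$ (equivalently $l_a-l_c=\frac{a_1b_1}{c_1}N_a$), and multiplicativity of the norm $N(c)=N(a)N(b)$ (equivalently $N_c=(a_1b_1/c_1)^2N_aN_b$). With these inputs your case 1 and case 3 follow the paper's route (your case 1 is in fact more careful, since it treats the mixed situation $a\in K$, $b\notin K$, which the paper glosses over by writing ``$a,b\in K$'').

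There is, however, a concrete gap in your case 2: the declared key input $c^2=N(a)N(b)$ is too weak. Working additively in $K^{\times}\otimes\Q$, the relation $[c]=\tfrac12([N(a)]+[N(b)])$ only collapses the non-$P$ part of the expansion, which is $(l_al_b)\wedge\frac{c}{a_1b_1}$, to $\tfrac12(l_al_b)\wedge(N_aN_b)$, and this differs from $l_a\wedge N_a+l_b\wedge N_b$ by the residual term $\tfrac12\,(l_a/l_b)\wedge(N_b/N_a)$, which does not vanish on the strength of norm multiplicativity alone. What kills it is the additional fact $N_a=N_b$, which is special to the case $c\in K$: from $b=\bar a\,c/N(a)$ one gets $b_1=-a_1c/N(a)$ and hence the sharper identity $N_a=N_b=-c/(a_1b_1)$ --- precisely the identity the paper proves and uses at this point. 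You need to add this ingredient to close case 2. Finally, note that the phrase ``one arrives at $Z(a,b)$'' conceals exactly the sign-sensitive step you yourself flag as the main obstacle: once the non-$P$ part is collapsed to $l_a\wedge N_a+l_b\wedge N_b$ (up to sign), whether it matches $+Z(a,b)$ or $-Z(a,b)$ hinges on the orientation convention for $\delta_2[x]_2$ (that is, $x\wedge(1-x)$ versus $(1-x)\wedge x$) and on the sign built into the definition of $Z$; this must be pinned down explicitly rather than asserted, both here and in the final assembly of your case 3.
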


We will prove this lemma in the end of this section.
\begin{prop}
\label{prop:formula}
The following equality holds
\begin{equation*}
    \begin{split}
        &\delta_{n+1}(p_{n+1}[1](-X(a,b)\wedge L(y_{n-1})))=\\
&=c_{P,n}[2]\wt \delta_{n}([a,b]\otimes y_{n-1})-p_{n+1}[2](Z(a,b)\wedge L(y_{n-1})).
    \end{split}
\end{equation*}
\end{prop}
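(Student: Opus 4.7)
The plan is to reduce everything to Lemma \ref{lemma:differential_of_X} via two structural observations, and then to match terms using multilinearity of $\wedge$ in $\Lambda^{\bullet}K(t)^{\times}$.

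First I would record that $p_{n+1}=\mathrm{id}-C_{1,n}\circ\partial_{1,n+1}$ is a morphism of complexes — both $C_{1,n}$ and $\partial_{1,n+1}$ are, so $p_{n+1}$ is as well — which gives the intertwining relation $\delta_{n+1}\circ p_{n+1}[1]=p_{n+1}[2]\circ\delta_{n+1}$. Combined with the formula $\delta_{n+1}(w\otimes v)=\delta(w)\wedge v$ on $B_2(K(t))\otimes\Lambda^{n-1}K(t)^{\times}$, this rewrites the left-hand side as $-p_{n+1}[2]\bigl(\delta X(a,b)\wedge L(y_{n-1})\bigr)$. Substituting Lemma \ref{lemma:differential_of_X} and distributing, I obtain
\begin{equation*}
p_{n+1}[2]\!\left(P\wedge \tfrac{L(c)}{L(a)L(b)}\wedge L(y_{n-1})\right)-p_{n+1}[2]\bigl(Z(a,b)\wedge L(y_{n-1})\bigr).
\end{equation*}

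The second summand is exactly the $-p_{n+1}[2](Z(a,b)\wedge L(y_{n-1}))$ term of the claim. To identify the first summand with $c_{P,n}[2]\wt\delta_n([a,b]\otimes y_{n-1})$, I would expand, using multilinearity and the convention that the group law of $K(t)^{\times}$ is written additively inside $\Lambda^{\bullet}$,
\begin{equation*}
\tfrac{L(c)}{L(a)L(b)}\wedge L(y_{n-1})=L(ab)\wedge L(y_{n-1})-L(a)\wedge L(y_{n-1})-L(b)\wedge L(y_{n-1}),
\end{equation*}
recognize the right-hand side as $L\bigl(\wt\delta_n([a,b]\otimes y_{n-1})\bigr)$ by the defining formula for $\wt\delta_n$ on the $R_1(F^{\times})$-component, and finally invoke the defining formula $c_{P,n}[2](z)=p_{n+1}[2](P\wedge L(z))$ to conclude.

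I do not expect any genuine obstacle inside Proposition \ref{prop:formula} itself: once Lemma \ref{lemma:differential_of_X} is available, the argument is essentially bookkeeping, and the only non-trivial input is the commutativity of $p_{n+1}$ with $\delta_{n+1}$, which is automatic. The real work sits in Lemma \ref{lemma:differential_of_X}, whose proof must handle separately the three cases in the definition of $X(a,b)$ (depending on whether $a$, $b$, and their product $c$ lie in $K$), and that case analysis is where I expect the main difficulty of this subsection to reside.
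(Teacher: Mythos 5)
Your proposal is correct and follows essentially the same route as the paper: commute $\delta_{n+1}$ past $p_{n+1}$ (a morphism of complexes), substitute Lemma \ref{lemma:differential_of_X}, and identify the term $p_{n+1}[2]\bigl(P\wedge \tfrac{L(c)}{L(a)L(b)}\wedge L(y_{n-1})\bigr)$ with $c_{P,n}[2]\wt\delta_n([a,b]\otimes y_{n-1})$ via the definitions of $c_{P,n}[2]$ and $\wt\delta_n$. The only difference is that you spell out the last identification explicitly, which the paper merely asserts.
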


\begin{proof}[Proof of Proposition \ref{prop:formula}]
By lemma \ref{lemma:differential_of_X}, we have
\begin{equation*}
\begin{split}
&\delta_{n+1}(p_{n+1}[1](-X(a,b)\wedge L(y_{n-1})))=p_{n+1}[2]\delta_{n+1} (-X(a,b)\wedge L(y_{n-1}))=\\
&=p_{n+1}[2]\left(P\wedge  \dfrac {L(c)}{L(a)L(b)}\wedge L(y_{n-1})\right)-p_{n+1}[2](Z(a,b)\wedge L(y_{n-1})).
\end{split}
\end{equation*}
The first term is equal to $c_{P,n}[2]\wt\delta_n([a,b]\otimes y_{n-2})$. The proposition is proven.
\end{proof}

\begin{prop}
\label{prop:diagramm_is_commutative}
The diagram (\ref{formula:diagramm_cPn}) is commutative.
\end{prop}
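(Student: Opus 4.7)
The plan is to check the identity $\delta_{n+1}c_{P,n}[1]=c_{P,n}[2]\wt\delta_n$ separately on each of the two summands of the source of $c_{P,n}[1]$. On the summand $\Q[F^{\times}]_2\otimes \Lambda^{n-2}\Q[F^{\times}]$ there is nothing more to do: this is exactly the content of Proposition \ref{prop:second_component}. The entire work is therefore concentrated on the component $R_1(F^{\times})\otimes \Lambda^{n-1}\Q[F^{\times}]$, where $c_{P,n}[1]$ is the sum of two terms, and I will compute $\delta_{n+1}$ of each term in turn.

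For the first term $p_{n+1}[1](-X(a,b)\otimes L(y_{n-1}))$, Proposition \ref{prop:formula} already gives the answer: its differential equals $c_{P,n}[2]\wt\delta_n([a,b]\otimes y_{n-1})-p_{n+1}[2](Z(a,b)\wedge L(y_{n-1}))$. For the second term $h_{n+1}(p_{n+1}[2](Z(a,b)\wedge L(y_{n-1})))$, I would invoke the defining property $\delta_{n+1}h_{n+1}=p_{n+1}[2]$ of the homotopy $h_{n+1}$, together with the idempotence of the projection $p_{n+1}[2]$, to obtain
\[
\delta_{n+1}\bigl(h_{n+1}(p_{n+1}[2](Z(a,b)\wedge L(y_{n-1})))\bigr)=p_{n+1}[2](p_{n+1}[2](Z(a,b)\wedge L(y_{n-1})))=p_{n+1}[2](Z(a,b)\wedge L(y_{n-1})).
\]
Adding the two contributions, the spurious $p_{n+1}[2](Z(a,b)\wedge L(y_{n-1}))$ appears with opposite signs and cancels, leaving exactly $c_{P,n}[2]\wt\delta_n([a,b]\otimes y_{n-1})$.

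Before concluding I would verify the two implicit prerequisites that make this formal manipulation legal: first, that $Z(a,b)\wedge L(y_{n-1})$ indeed lies in the subgroup $\Lambda^{'n+1}K(t)^{\times}_1$ on which $h_{n+1}$ is defined (which is the remark made immediately before the definition of $c_{P,n}[1]$); and second, that $p_{n+1}[2]\circ p_{n+1}[2]=p_{n+1}[2]$, which follows immediately from $\partial_{1,n+1}\circ C_{1,n}=\mathrm{id}$ via the algebraic identity $(\mathrm{id}-CA)^2=\mathrm{id}-CA$ whenever $AC=\mathrm{id}$. Neither of these is a genuine obstacle; the crux of the proposition is really packaged inside Lemma \ref{lemma:differential_of_X} (used in Proposition \ref{prop:formula}), and once that lemma is in hand the present proposition is essentially a bookkeeping statement about how the two pieces of $c_{P,n}[1]$ were engineered precisely to cancel the unwanted $Z(a,b)$-contribution.
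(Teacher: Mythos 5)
Your proof is correct and follows essentially the same route as the paper: split into the two summands, use Proposition \ref{prop:second_component} on the first, and on the $R_1(F^{\times})$-component combine Proposition \ref{prop:formula} with the homotopy identity $\delta_{n+1}h_{n+1}=p_{n+1}$ and the idempotence $p_{n+1}^2=p_{n+1}$ so that the $Z(a,b)$-terms cancel. The only difference is that you make explicit the two sanity checks (that $Z(a,b)\wedge L(y_{n-1})$ lies in $\Lambda^{'n+1}K(t)^{\times}_1$ and that $p_{n+1}$ is a projection) which the paper treats as immediate.
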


\begin{proof}
By Proposition \ref{prop:formula}, we have:
\begin{equation*}
\begin{split}
\delta_{n+1}c_{P,n}[1]([a,b]\otimes y_{n-1})&=\delta_{n+1}p_{n+1}[1](-X(a,b)\wedge L(y_{n-1}))+\\
+\delta_{n+1}h_{n+1}p_{n+1}[2](Z(a,b)\wedge L(y_{n-1}))&=c_{P,n}[2]\wt \delta_{n}([a,b]\otimes y_{n-1})-\\
-p_{n+1}[2](Z(a,b)\wedge L(y_{n-1}))&+\delta_{n+1}h_{n+1}p_{n+1}[2](Z(a,b)\wedge L(y_{n-1})).
\end{split}
\end{equation*}
So it is enough to prove the following formula:
$$p_{n+1}[2](Z(a,b)\wedge L(y_{n-1}))=\delta_{n+1}h_{n+1}p_{n+1}[2](Z(a,b)\wedge L(y_{n-1})).$$
By the definition of the homotopy $h_{n+1}$ we have: $\delta_{n+1}h_{n+1}p_{n+1}=p_{n+1}^2=p_{n+1}$.

The commutativity of \ref{formula:diagramm_cPn} on the first component is proven. Its commutativity on the second component follows from the Proposition \ref{prop:second_component}. 
\end{proof}

\subsection{Proof of Theorem \ref{thm:main} and Theorem \ref{thm:norm_2}} \begin{proof}[The proof of Theorem \ref{thm:main}]
Let us prove that the following map is a well-defined morphism of complexes:
\begin{equation}
\begin{tikzcd}[row sep=huge]
R_n(F)\arrow[r,"j_n"]\arrow[d,"0"]&R_1(F^{\times})\otimes \Lambda^{n-1}\Q[F^\times]\oplus \Q[F^{\times}]_2\otimes \Lambda^{n-2}\Q[F^{\times}] \arrow[r,"\widetilde\delta_{n}"] \arrow[d,swap,"c_{P,n}{[1]}"]& \Lambda^{n} \Q[F^\times] \arrow[d,"c_{P,n}{[2]}"]\\
0\arrow[r]& B_2(K(t))\otimes_a \Lambda^{n-1} K(t)^{\times}/B_2(K)\otimes_a \Lambda^{n-1} K^{\times}\arrow[r,"\delta_{n+1}"] & \Lambda^{n+1} K(t)^{\times}/\Lambda^{n+1} K^{\times}.
\end{tikzcd}
\end{equation}
By Proposition \ref{prop:diagramm_is_commutative} we only need to show that the composition $c_{P,n}[1]j_n$ is zero. By construction of the map $c_{P,n}$, for any monic irreducible polynomial $Q$ we have $\partial_{Q,n}c_{P,n}[1]=0$ if $Q\ne P$ and $\partial_{Q,n}c_{P,n}[1]=\pi_n[1]$ otherwise. It follows that $\partial_Qc_{P,n}[1]j_n=0$ for any $Q$. So by the main result of \cite{rudenko2015strong} the composition $c_{P,n}[1]j_n$ lies in $B_2(K)\otimes_a\L^{n-1}K^{\times}$. So the map is well defined. The second statement has already been proven.
\end{proof}

\begin{proof}[Proof of Theorem \ref{thm:norm_2}]
It is easy to that the map $n_{F/K,2}$ is equal to the composition $-\partial_{\infty}c_{P,2}$. So the statement follows from Corollary \ref{cor:main}.
\end{proof}

\subsection{Proof of Lemma \ref{lemma:differential_of_X}}
The following lemma is obtained by a direct computation.
\begin{lemma}
\label{Lemma_about_norm}
Let $a,b\in F$. The following statements are true(in the last two statements we need $a_1,b_1\ne 0$):
\begin{enumerate}
\item $L(a)L(b)-L(c)=a_1b_1P$
    \item $l_{a}-l_{c}=a_1b_1/(c)_1N_{a}.$
    \item $N_{c}=\left(a_1b_1/(c)_1\right)^2N_{a}N_{b}$.
\end{enumerate}
\end{lemma}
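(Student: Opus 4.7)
The plan is to verify the three identities by direct computation, choosing a basis $\{1,\xi\}$ of $F/K$ with $\xi$ a root of $P$. Write $P(t) = t^2 + pt + q$, so that $\xi^2 = -p\xi - q$ and the Galois conjugate satisfies $\xi + \bar\xi = -p$, $\xi\bar\xi = q$. With $a = a_0 + a_1\xi$, $b = b_0 + b_1\xi$, the product $c = ab$ has coordinates
$c_0 = a_0 b_0 - a_1 b_1 q$ and $c_1 = a_0 b_1 + a_1 b_0 - a_1 b_1 p$, and $N(a) = a_0^2 - p a_0 a_1 + q a_1^2$.

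For (1), I would expand $L(a)L(b) = (a_0 + a_1 t)(b_0 + b_1 t)$, subtract $L(c) = c_0 + c_1 t$ using the coordinate formulas above, and observe that the constant and linear terms cancel, leaving $a_1 b_1 (t^2 + pt + q) = a_1 b_1 P$. A cleaner argument: $L(a)L(b) - L(c)$ is a polynomial of degree at most $2$ in $t$ with leading coefficient $a_1 b_1$; since $L(x)\big|_{t=\xi} = x_0 + x_1\xi = x$ for every $x\in F$, it vanishes at $t = \xi$, and by Galois symmetry also at $t = \bar\xi$, so it is divisible by $P$, and degree/leading-coefficient comparison forces equality with $a_1 b_1 P$.

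For (2), assuming $a_1,b_1\ne 0$ (and temporarily $c_1 \ne 0$ so that $l_c = L(c)/c_1$; the case $c\in K$ will be handled separately in the application), compute
\[
l_a - l_c \;=\; \frac{L(a)}{a_1} - \frac{L(c)}{c_1} \;=\; \frac{c_1 L(a) - a_1 L(c)}{a_1 c_1}.
\]
Because $L(a) = a_0 + a_1 t$ and $L(c) = c_0 + c_1 t$, the $t$-terms cancel in the numerator, which reduces to $c_1 a_0 - a_1 c_0$. Substituting the formulas for $c_0, c_1$ and expanding gives $a_0^2 b_1 - p a_0 a_1 b_1 + q a_1^2 b_1 = b_1 N(a)$. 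Hence $l_a - l_c = b_1 N(a)/(a_1 c_1) = (a_1 b_1/c_1)\cdot (N(a)/a_1^2) = (a_1 b_1/c_1)\, N_a$, as claimed.

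Finally, (3) follows formally from multiplicativity of the norm, with no field computation required: $N(c) = N(a)N(b)$, so
\[
N_c \;=\; \frac{N(c)}{c_1^2} \;=\; \frac{N(a)N(b)}{c_1^2} \;=\; \left(\frac{a_1 b_1}{c_1}\right)^{\!2} \frac{N(a)}{a_1^2}\cdot\frac{N(b)}{b_1^2} \;=\; \left(\frac{a_1 b_1}{c_1}\right)^{\!2} N_a N_b.
\]
There is no real obstacle here; the only care needed is bookkeeping in part (2), namely recognizing that the $t$-terms cancel in $c_1 L(a) - a_1 L(c)$ so that the identity reduces to the single polynomial identity $c_1 a_0 - a_1 c_0 = b_1 N(a)$, which then matches $N_a = N(a)/a_1^2$ exactly.
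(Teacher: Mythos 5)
Your proof is correct, and it matches the paper's approach exactly: the paper gives no argument beyond the remark that the lemma ``is obtained by a direct computation,'' which is precisely what you carry out (with the helpful extra observation that (1) also follows from divisibility by the minimal polynomial and that (3) is just multiplicativity of the norm). Your note that part (2) implicitly requires $c_1\ne 0$ is consistent with how the paper uses the lemma, since the case $c\in K$ is treated separately there.
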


\begin{proof}[Proof of Lemma \ref{lemma:differential_of_X}]
\begin{enumerate}
    \item If $a,b\in K$ the statement is obvious.
    \item Let us assume that $a,b\not \in K$, but $c\in K$. By the first statement of the Lemma \ref{Lemma_about_norm}
    \begin{equation*}
        \delta_2\left(\dfrac{L(c)}{L(a)L(b)}\right)=\dfrac{L(c)}{L(a)L(b)}\wedge \dfrac{Pa_1b_1}{L(a)L(b)}=\dfrac{L(c)}{L(a)L(b)}\wedge P+
        l_al_b\wedge \dfrac c{a_1b_1}
    \end{equation*}
    So it is enough to prove that $N_{a}=N_{b}=-c/(a_1b_1)$. 
    Since $c\in K$ and $b=\bar {a}\dfrac{c}{N(a)}$ we have $b_1=(\bar a)_1\dfrac{c}{N(a)}=-a_1\dfrac{ab}{N(a)}$. So $c/(a_1b_1)=\dfrac{ab}{-a_1^2ab/N(a)}=-N_{a}$.
\item
Using the first statement of the Lemma \ref{Lemma_about_norm} we have

    \begin{align*}
         &\delta_2\left(\dfrac{L(c)}{L(a)L(b)}\right)=\dfrac{L(c)}{L(a)L(b)}\wedge \dfrac{Pa_1b_1}{L(a)L(b)}
    =\left(\dfrac {l_{ab}}{l_{a}l_{b}}\cdot \dfrac{c_1}{a_1b_1}\right)\wedge\left(\dfrac P{l_{a}l_{b}}\right)=\\
        &=\dfrac{L(c)}{L(a)L(b)}\wedge P-l_{ab}\wedge l_{a}-l_{ab}\wedge l_{b}+l_{a}l_{b}\wedge \dfrac{c_1}{a_1b_1}
    \end{align*}

Using the second statement of the Lemma \ref{Lemma_about_norm} we have
    \begin{equation*}
        \delta_2\left(\dfrac{l_{ab}}{l_{a}}\right)=\dfrac{l_{ab}}{l_{a}}\wedge \dfrac{a_1b_1N_{a}}{l_{a}c_1}, \quad \delta_2\left(\dfrac{l_{ab}}{l_{b}}\right)=\dfrac{l_{ab}}{l_{b}}\wedge \dfrac{a_1b_1N_{a}}{l_{b}c_1} 
    \end{equation*}

 So we get
 
 \begin{equation*}
     \delta_2\left(X(a,b)\right)=\dfrac{L(c)}{L(a)L(b)}\wedge P+l_{ab}\wedge \left(\dfrac {a_1b_1}{(ab_1)}\right)^2N_aN_b-l_{a}\wedge N_{a}-l_{b}\wedge N_{b}.
\end{equation*}
    Now the statement follows from the last statement of the Lemma \ref{Lemma_about_norm}.
\end{enumerate}
\end{proof}


\begin{thebibliography}{Rud15}

\bibitem[BT73]{bass1973milnor}
H.~Bass and J.~Tate.
\newblock The {M}ilnor ring of a global field.
\newblock In H.~Bass, editor, {\em “Classical” Algebraic K-Theory, and
  Connections with Arithmetic}, volume 342 of {\em Lecture Notes in
  Mathematics}, pages 347--446, Berlin, Heidelberg, 1973. Springer.

\bibitem[Rud15]{rudenko2015strong}
D.~Rudenko.
\newblock The strong {S}uslin reciprocity law and its applications to scissor
  congruence theory in hyperbolic space.
\newblock {\em Functional Analysis and Its Applications}, 50(1):66--70,
  November 2015.
\newblock A full version available at \url{arxiv.org/abs/1511.00520}.

\bibitem[Sus79]{suslin1979reciprocity}
A.~Suslin.
\newblock Reciprocity laws and the stable rank of polynomial rings.
\newblock {\em Izv. Akad. Nauk SSSR Ser. Mat.}, 43(6):1394--1429, 1979.

\end{thebibliography}
\end{document}